\documentclass{article}
\usepackage{amsmath,amssymb,amsthm}
\usepackage{parskip}
\usepackage{authblk}
\usepackage{tikz}
\usepackage[numbers]{natbib}
\usepackage{geometry}
\usepackage{multicol}
\usepackage{enumerate}
\usepackage[bookmarks=false,colorlinks=true,citecolor=blue]{hyperref}
\usepackage[mathscr]{euscript}
\usepackage{mathtools}

\DeclarePairedDelimiter\ceil{\lceil}{\rceil}
\DeclarePairedDelimiter\floor{\lfloor}{\rfloor}

\newtheorem{theorem}{Theorem}[section]
\newtheorem{conjecture}[theorem]{Conjecture}

\newtheorem{definition}[theorem]{Definition}

\newtheorem{observation}[theorem]{Observation}
\newtheorem{case}{Case}{\bfseries}{\itshape}
\newtheorem{subcase}{Subcase}
\numberwithin{subcase}{case}

\title{\textbf{An improved upper bound for the domination number of a graph}}
\author[1]{\textbf{Subramanian~Arumugam}}
\author[2]{\textbf{Suresh~Manjanath~Hegde}}
\author[2]{\textbf{Shashanka~Kulamarva}\thanks{Corresponding Author, Current Affiliation: Indian Institute of Science, Bangalore-560012, India\\ Email: shashankak@iisc.ac.in, shashank.klm@gmail.com, ORCID ID: 0009-0002-2982-6044}}
\affil[1]{Ramco Institute of Technology, Rajapalayam-626117, India}
\affil[2]{National Institute of Technology Karnataka, Surathkal-575025, India\protect\\ Email: \texttt{s.arumugam.klu@gmail.com, smhegde@nitk.edu.in, skulamarva.187ma007@nitk.edu.in}}
\date{}

\begin{document}
	\maketitle
	\begin{abstract}
		\noindent Let $G$ be a graph of order $n$. A classical upper bound for the domination number of a graph $G$ having no isolated vertices is $\floor{\frac{n}{2}}$. However, for several families of graphs, we have $\gamma(G) \le \floor{\sqrt{n}}$ which gives a substantially improved upper bound. In this paper, we give a condition necessary for a graph $G$ to have $\gamma(G) \le \floor{\sqrt{n}}$, and some conditions sufficient for a graph $G$ to have $\gamma(G) \le \floor{\sqrt{n}}$. We also present a characterization of all connected graphs $G$ of order $n$ with $\gamma(G) = \floor{\sqrt{n}}$. Further, we prove that for a graph $G$ not satisfying $rad(G)=diam(G)=rad(\overline{G})=diam(\overline{G})=2$, deciding whether $\gamma(G) \le \floor{\sqrt{n}}$ or $\gamma(\overline{G}) \le \floor{\sqrt{n}}$ can be done in polynomial time. We conjecture that this decision problem can be solved in polynomial time for any graph $G$.\\
		
		\noindent \textbf{Keywords}: \textit{Domination in Graphs; Domination Number; Bound on Domination; Private Neighbor}\\
		
		\noindent \textbf{Mathematics Subject Classification}: 05C69
	\end{abstract}
	
	\section{Introduction}
	By a graph $G=(V,E)$, we mean a finite undirected graph with neither loops nor multiple edges. The order $|V|$ and size $|E|$ of $G$ are represented by $n$ and $m$ respectively. For basic graph theoretic terminologies, we refer to the book \cite{Chartrand2015GD}.
	
	Let $G=(V,E)$ be a graph. A subset $D$ of $V$ is said to be a dominating set of $G$ if every vertex $v \in V \setminus D$ is adjacent to a vertex in $D$. The domination number of $G$ denoted by $\gamma(G)$, is the minimum cardinality of a dominating set of $G$. A dominating set $D$ of $G$ with $|D|=\gamma$ is called a $\gamma$-set of $G$.
	
	A dominating set $D$ of $G$ is said to be a total dominating set if the induced subgraph $G[D]$ has no isolated vertices. Further, a dominating set $D$ of $G$ is said to be a connected dominating set if the induced subgraph $G[D]$ is a connected subgraph of $G$. The total domination number $\gamma_t$ and the connected domination number $\gamma_c$ of $G$ are defined as follows.
	\begin{align*}
		\gamma_t(G) & = \text{min }\{|D| : \text{$D$ is a total dominating set of $G$}\}\\
		\gamma_c(G) & = \text{min }\{|D| : \text{$D$ is a connected dominating set of $G$}\}
	\end{align*}
	A total dominating set $D$ of $G$ with $|D|=\gamma_t$ is called a $\gamma_t$-set of $G$. Further, a connected dominating set $D$ of $G$ with $|D|=\gamma_c$ is called a $\gamma_c$-set of $G$. We observe that $\gamma_t$ is defined only for graphs without isolated vertices and $\gamma_c$ is defined only for connected graphs. For domination related concepts, we refer to the book \cite{Haynes1998Dom}.
	
	The concept of domination has significant applications in Network Science. \citet{Brewster2019BroadDom} have investigated the concept of broadcast domination in the context of communication systems. For specific applications of dominating sets in social networks and biological networks, we refer to \cite{Kelleher1988DomSocNtwk} and \cite{Nacher2016DomBioNtwk}. \citet{Cheng2003ApprxSchmDomAdhoc} demonstrated that the problem of constructing a connected dominating set in a unit disc graph is the same as constructing a virtual backbone in Ad Hoc wireless networks. In all such applications, the number of vertices in the network is large and the crucial problem is to find a dominating set of small order. In this paper, we prove that for most of the graphs, there exists a dominating set of order at most $\floor{\sqrt{n}}$, which is quite significant in the context of the above mentioned applications.
	
	Trivially, $1 \le \gamma \le n$ and $\gamma = n$ if and only if $G = \overline{K_n}$. For graphs without isolated vertices, a classical result of \citet{Ore1962ThrGrphs} gives an improved upper bound for $\gamma$.
	
	\begin{theorem}\cite{Ore1962ThrGrphs}
		If a graph $G$ has no isolated vertices, then $\gamma(G) \le \frac{n}{2}$.
	\end{theorem}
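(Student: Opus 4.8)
The plan is to use the classical observation that in a graph without isolated vertices the complement of a minimal dominating set is again a dominating set, and then to finish by a one-line counting argument applied to a $\gamma$-set of $G$.

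First I would isolate the key lemma: if $G$ has no isolated vertices and $D$ is a minimal dominating set of $G$, then $V \setminus D$ is also a dominating set of $G$. To prove it, fix an arbitrary $v \in D$; it suffices to show that $v$ has a neighbor in $V \setminus D$. Suppose to the contrary that $N(v) \subseteq D$. Then I claim $D \setminus \{v\}$ is still a dominating set, contradicting the minimality of $D$: no vertex of $V \setminus D$ depended on $v$ for its domination, since $v$ has no neighbor outside $D$; and $v$ itself is dominated by $D \setminus \{v\}$ because the no-isolated-vertices hypothesis forces $v$ to have some neighbor, which by assumption lies in $D$. This contradiction shows that every $v \in D$ has a neighbor in $V \setminus D$, i.e., $V \setminus D$ dominates $D$, and hence $V \setminus D$ dominates all of $V$.

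With the lemma available, I would complete the proof as follows. Let $D$ be a $\gamma$-set of $G$. A $\gamma$-set is in particular a minimal dominating set (removing any vertex leaves a set of size $\gamma - 1 < \gamma$, which cannot dominate), so the lemma applies and $V \setminus D$ is a dominating set. Therefore $\gamma(G) \le |V \setminus D| = n - |D| = n - \gamma(G)$, which rearranges to $2\gamma(G) \le n$, that is, $\gamma(G) \le \frac{n}{2}$; since $\gamma(G)$ is an integer, this even gives $\gamma(G) \le \floor{\frac{n}{2}}$.

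The only substantive step is the lemma, and the one delicate point in it is verifying that the deleted vertex $v$ itself remains dominated by $D \setminus \{v\}$ — this is precisely where the hypothesis that $G$ has no isolated vertices is used, and it is what rules out the degenerate case. Everything after the lemma is an immediate counting step, so I do not expect any further obstacle.
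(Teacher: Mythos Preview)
Your argument is correct and is precisely the classical proof of Ore's theorem: show that in a graph without isolated vertices the complement of any minimal dominating set is itself dominating, then apply this to a $\gamma$-set to get $\gamma(G) \le n - \gamma(G)$. Note, however, that the paper does not actually prove this statement; it is quoted as a known result from \cite{Ore1962ThrGrphs} in the introduction, so there is no in-paper proof to compare against. Your proof is the standard one and would be an appropriate justification.
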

	
	For a graph theoretic parameter, Nordhaus-Gaddum type result gives an upper bound and a lower bound for the sum and product of the values of the parameter for a graph $G$ and its complement $\overline{G}$. An excellent survey of Nordhaus-Gaddum type results for various parameters is given in \cite{Aouchiche2013NordGaddSurvey}. In the classical paper \cite{Nordhaus1956ComGrphs}, Nordhaus and Gaddum established the first such result for the chromatic number. \citet{Jaeger1972Dom} obtained similar results for the domination number. We state only the multiplicative version of the theorem.
	
	\begin{theorem}\cite{Jaeger1972Dom}\label{thm:JaegerDomProduct}
		Let $G$ be a graph of order $n$. Then $\gamma(G)\gamma(\overline{G}) \le n$.
	\end{theorem}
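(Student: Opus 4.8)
The plan is to prove the inequality by strong induction on $n$, after disposing of the cases in which one of $\gamma(G),\gamma(\overline{G})$ is at most $2$. If $\gamma(G)=1$ or $\gamma(\overline{G})=1$, the bound follows from the trivial estimate $\gamma\le n$. If $\gamma(G)=2$, then $\overline{G}$ has no isolated vertex (an isolated vertex of $\overline{G}$ is universal in $G$, which would force $\gamma(G)=1$), so Ore's bound gives $\gamma(\overline{G})\le n/2$ and hence $\gamma(G)\gamma(\overline{G})\le n$; the case $\gamma(\overline{G})=2$ is symmetric. So it remains to treat graphs with $\gamma(G)\ge 3$ and $\gamma(\overline{G})\ge 3$; note that since $N_G[v]$ dominates $\overline{G}$ for every vertex $v$ we have $\gamma(\overline{G})\le\delta(G)+1$, so in this regime $\delta(G)\ge 2$, and symmetrically $\delta(\overline{G})\ge 2$.

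For the inductive step I would use the elementary facts $\gamma(H-v)\ge\gamma(H)-1$ (valid for every graph $H$ and vertex $v$) and $\overline{G-v}=\overline{G}-v$. Call a vertex $v$ \emph{stable} if it is domination-critical for neither $G$ nor $\overline{G}$, i.e.\ $\gamma(G-v)\ge\gamma(G)$ and $\gamma(\overline{G}-v)\ge\gamma(\overline{G})$. If a stable vertex $v$ exists, applying the induction hypothesis to the $(n-1)$-vertex graph $G-v$ gives
\[
\gamma(G)\,\gamma(\overline{G})\;\le\;\gamma(G-v)\,\gamma(\overline{G}-v)\;=\;\gamma(G-v)\,\gamma(\overline{G-v})\;\le\;n-1\;<\;n ,
\]
and we are done. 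Thus the entire difficulty is to show that a stable vertex exists whenever $\gamma(G),\gamma(\overline{G})\ge 3$.

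If no stable vertex exists, every vertex is domination-critical for $G$ or for $\overline{G}$; write $A,B$ for the two sets of critical vertices, so $A\cup B=V$. The relevant structural fact is that if $v$ is critical for $G$ then $\gamma(G-v)=\gamma(G)-1$, and a minimum dominating set of $G-v$ cannot dominate $v$ in $G$, hence it lies in $V\setminus N_G[v]$; consequently $\deg_{\overline{G}}(v)\ge\gamma(G)-1$ and $v$ is an isolated vertex of some minimum dominating set of $G$ (and symmetrically for $\overline{G}$). The goal is then to play these degree/independence constraints for $A$ and $B$ off against Ore's bound $\gamma\le n/2$ and against $(\gamma(G)-1)(\gamma(\overline{G})-1)\le n-1$ (the induction hypothesis applied to an arbitrary $G-v$), reducing the surviving triples $(n,\gamma(G),\gamma(\overline{G}))$ to a finite list that is then eliminated by inspecting graphs of small order --- conveniently organised using the classification of the graphs with $\gamma=n/2$. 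I expect this last step to be the main obstacle: it is precisely where the clean inductive reduction breaks down, and one has to argue directly with the structure of domination-critical graphs; everything else --- the reductions to $\gamma\ge 3$, the deletion argument, and the bookkeeping with $\gamma(H-v)$ --- is routine.
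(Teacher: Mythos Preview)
The paper does not give its own proof of this theorem: Theorem~\ref{thm:JaegerDomProduct} is simply quoted from \cite{Jaeger1972Dom} and used as input for Observation~\ref{obs:GOrGCompHasLesserDom}. So there is no proof in the paper to compare against, and your proposal must be judged on its own merits.

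Your treatment of the cases $\min\{\gamma(G),\gamma(\overline{G})\}\le 2$ is correct and clean, as is the deletion framework: the inequalities $\gamma(H-v)\ge\gamma(H)-1$ and $\overline{G-v}=\overline{G}-v$ are fine, and if a stable vertex exists the induction goes through exactly as you say. The structural remarks about a critical vertex $v$ (that a $\gamma$-set of $G-v$ lies in $V\setminus N_G[v]$, hence $\deg_{\overline{G}}(v)\ge\gamma(G)-1$) are also correct.

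The genuine gap is the case in which no stable vertex exists. You acknowledge this yourself, but the sketch you give does not close it. Applying the induction hypothesis to an arbitrary $G-v$ yields only $(\gamma(G)-1)(\gamma(\overline{G})-1)\le n-1$, which is equivalent to $\gamma(G)\gamma(\overline{G})\le n+\gamma(G)+\gamma(\overline{G})-2$; for $\gamma(G),\gamma(\overline{G})\ge 3$ this is strictly weaker than the target and imposes no nontrivial restriction. The degree constraints you derive (vertices in $A$ have $\deg_{\overline{G}}\ge\gamma(G)-1$, vertices in $B$ have $\deg_{G}\ge\gamma(\overline{G})-1$) and Ore's bound $\gamma\le n/2$ are compatible with arbitrarily large $n$, so there is no evident ``finite list'' of surviving triples $(n,\gamma(G),\gamma(\overline{G}))$ to inspect. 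Nor have you shown that a stable vertex must exist when both domination numbers are at least $3$; vertex-domination-critical graphs with $\gamma\ge 3$ certainly exist, and nothing in your outline rules out $A\cup B=V$. As written, then, this is a plan whose decisive step is missing rather than a proof.
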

	
	Theorem~\ref{thm:JaegerDomProduct} leads to the following observation giving a much improved upper bound for the domination number.
	
	\begin{observation}\label{obs:GOrGCompHasLesserDom}
		If $G$ is any graph of order $n$, then either $\gamma(G) \le \floor{\sqrt{n}}$ or $\gamma(\overline{G}) \le \floor{\sqrt{n}}$.
	\end{observation}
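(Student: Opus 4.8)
The plan is to argue by contradiction, using Theorem~\ref{thm:JaegerDomProduct} as the only external input. Suppose that neither of the asserted inequalities holds, i.e., $\gamma(G) > \floor{\sqrt{n}}$ and $\gamma(\overline{G}) > \floor{\sqrt{n}}$ simultaneously. Since $\gamma(G)$ and $\gamma(\overline{G})$ are positive integers, each of these strict inequalities between integers upgrades to $\gamma(G) \ge \floor{\sqrt{n}} + 1$ and $\gamma(\overline{G}) \ge \floor{\sqrt{n}} + 1$.

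Next I would multiply the two bounds to obtain $\gamma(G)\,\gamma(\overline{G}) \ge \bigl(\floor{\sqrt{n}} + 1\bigr)^2$. The one elementary fact needed is that $\bigl(\floor{\sqrt{n}} + 1\bigr)^2 \ge n+1$: writing $k = \floor{\sqrt{n}}$, we have $k \le \sqrt{n}$, hence $k + 1 > \sqrt{n}$, so $(k+1)^2 > n$; and because $(k+1)^2$ is an integer strictly exceeding $n$, in fact $(k+1)^2 \ge n+1$. Combining these gives $\gamma(G)\,\gamma(\overline{G}) \ge n+1 > n$, contradicting Theorem~\ref{thm:JaegerDomProduct}. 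Therefore at least one of $\gamma(G) \le \floor{\sqrt{n}}$ and $\gamma(\overline{G}) \le \floor{\sqrt{n}}$ must hold.

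I do not expect any real obstacle here; the only point deserving a little care is the passage from the strict real inequality $(k+1)^2 > n$ to the integer inequality $(k+1)^2 \ge n+1$, which is precisely what yields the sharp bound $\floor{\sqrt{n}}$ rather than something weaker. Equivalently, one may avoid the contradiction setup altogether: by Theorem~\ref{thm:JaegerDomProduct} the smaller of $\gamma(G)$ and $\gamma(\overline{G})$ has square at most $\gamma(G)\,\gamma(\overline{G}) \le n$, hence is at most $\sqrt{n}$, and being an integer it is therefore at most $\floor{\sqrt{n}}$.
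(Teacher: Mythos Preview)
Your proof is correct and follows exactly the approach the paper intends: the paper does not spell out a proof but simply presents the observation as an immediate consequence of Theorem~\ref{thm:JaegerDomProduct}, and your contradiction argument (or the direct variant at the end) is precisely the routine way to extract it. There is nothing to add.
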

	
	In this paper, we investigate the structure of graphs satisfying $\gamma(G) \le \floor{\sqrt{n}}$ and $\gamma(\overline{G}) \le \floor{\sqrt{n}}$. The following definitions and theorems are necessary for our discussions.
	
	\begin{theorem}\cite{Berge1982ThrGrphsAndAppl, Walikar1979Dom}\label{thm:DomBoundDelta}
		For any graph $G$ with maximum degree $\Delta$, we have
		\begin{equation*}
			\ceil*{\frac{n}{1+\Delta}} \le \gamma(G) \le n-\Delta
		\end{equation*}
	\end{theorem}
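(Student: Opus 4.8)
The plan is to prove the two inequalities separately, each by an elementary counting argument.

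For the lower bound $\gamma(G) \ge \ceil*{\frac{n}{1+\Delta}}$, I would start from an arbitrary dominating set $D$ of $G$ and observe that a vertex $u \in D$ dominates precisely the vertices of its closed neighborhood $N[u] = \{u\} \cup N(u)$, which satisfies $|N[u]| = 1 + \deg(u) \le 1 + \Delta$. Since $D$ is dominating, $V = \bigcup_{u \in D} N[u]$, and hence
\[
  n = |V| \le \sum_{u \in D} |N[u]| \le |D|\,(1+\Delta).
\]
Rearranging gives $|D| \ge \frac{n}{1+\Delta}$, and since $|D|$ is an integer this strengthens to $|D| \ge \ceil*{\frac{n}{1+\Delta}}$. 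Applying this to a $\gamma$-set of $G$ yields the claimed lower bound.

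For the upper bound $\gamma(G) \le n - \Delta$, let $v$ be a vertex of maximum degree, so $\deg(v) = \Delta$. I would show that $D = V \setminus N(v)$ is a dominating set of $G$. Since $G$ has no loops, $v \notin N(v)$, so $v \in D$; every vertex of $N(v)$ is adjacent to $v \in D$, and every vertex outside $N(v)$ belongs to $D$ itself. Thus $D$ dominates $G$, and $|D| = n - |N(v)| = n - \Delta$, giving $\gamma(G) \le n - \Delta$.

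Neither half of the argument presents a genuine obstacle; the result is a packing-style bound from below and an explicit dominating set from above. The only places that warrant a moment's care are passing to the ceiling in the lower bound (justified because $\gamma(G) \in \mathbb{Z}$) and the observation $v \notin N(v)$ that makes $V \setminus N(v)$ actually cover $N(v)$ in the upper bound.
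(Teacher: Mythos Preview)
Your argument is correct and is exactly the standard proof of this classical bound. Note, however, that the paper does not supply its own proof of this theorem: it is quoted from the literature (Berge; Walikar--Acharya--Sampathkumar) and used as a black box, so there is no in-paper proof to compare against.
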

	
	For any vertex $v \in V$, the set $N(v) = \{u \in V : uv \in E\}$ is called the open neighborhood of $v$. Also, $N[v] = N(v) \cup \{v\}$ is called the closed neighborhood of $v$.
	
	\begin{definition}
		Let $G=(V,E)$ be a graph of order $n$. Let $S \subseteq V$ and let $v \in S$. A vertex $w$ is called a private neighbor of $v$ if $N[w] \cap S = \{v\}$. If $w \in V \setminus S$, then $w$ is called an external private neighbor of $v$.
	\end{definition}
	
	Let $pn[v,S]$ and $epn[v,S]$ denote the set of all private neighbors of $v$ and the set of all external private neighbors of $v$, respectively. One can see that $v$ is a private neighbor of itself if and only if $v$ is an isolated vertex in the induced subgraph $G[S]$. The following theorem mentioned in \cite{Haynes1998Dom} is a consequence of a result proved by \citet{Ore1962ThrGrphs}.
	
	\begin{theorem}\cite{Ore1962ThrGrphs}\label{thm:MinDomSetCondition}
		A dominating set $D$ of the graph $G$ is a minimal dominating set if and only if $pn[v,D] \neq \emptyset$ for all $v \in D$. If $v$ is not an isolated vertex in the induced subgraph $G[D]$, then $epn[v,D] \neq \emptyset$.
	\end{theorem}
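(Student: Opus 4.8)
The plan is to prove the biconditional through the standard reformulation of minimality: a dominating set $D$ is minimal if and only if, for every $v \in D$, the set $D \setminus \{v\}$ is not a dominating set of $G$. Thus the whole statement reduces to showing that, for a fixed $v \in D$, the set $D \setminus \{v\}$ fails to dominate $G$ exactly when $pn[v,D] \neq \emptyset$.

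For the forward direction I would assume $D$ is minimal and fix $v \in D$. Then $D \setminus \{v\}$ is not dominating, so there is a vertex $w$ with $N[w] \cap (D \setminus \{v\}) = \emptyset$. Since $D$ is dominating we have $N[w] \cap D \neq \emptyset$, and the only vertex that can lie in this intersection is $v$; hence $N[w] \cap D = \{v\}$, i.e. $w \in pn[v,D]$, so $pn[v,D] \neq \emptyset$. Conversely, assume $pn[v,D] \neq \emptyset$ for every $v \in D$, fix $v$, and pick $w \in pn[v,D]$, so $N[w] \cap D = \{v\}$. If $w \neq v$ then $w \notin D$ and $v$ is its only neighbor in $D$, so $w$ is undominated by $D \setminus \{v\}$; if $w = v$ then $v$ has no neighbor in $D$ at all, so again $v$ is undominated by $D \setminus \{v\}$. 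Either way $D \setminus \{v\}$ is not dominating, and since $v$ was arbitrary, $D$ is minimal.

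For the final assertion, suppose $v \in D$ is not isolated in $G[D]$; by the private neighbor condition just established there is some $w \in pn[v,D]$. Were $w = v$, we would have $N[v] \cap D = \{v\}$, contradicting that $v$ has a neighbor inside $D$; hence $w \neq v$, which forces $w \notin D$, and therefore $w \in epn[v,D]$, so $epn[v,D] \neq \emptyset$.

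The proof is elementary, and the only subtlety worth flagging is the degenerate case $w = v$, which occurs precisely when $v$ is isolated in $G[D]$ and is the reason a vertex can be counted as a private neighbor of itself; keeping the cases $w = v$ and $w \neq v$ separate throughout is the one place where an otherwise routine argument could slip.
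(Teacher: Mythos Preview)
Your argument is correct and is exactly the standard proof of Ore's characterization. Note, however, that the paper does not supply its own proof of this statement: Theorem~\ref{thm:MinDomSetCondition} is quoted from \cite{Ore1962ThrGrphs} (via \cite{Haynes1998Dom}) as a background result, so there is no in-paper proof to compare against.
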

	
	The following theorem is proved by \citet{Brigham1988Dom}.
	
	\begin{theorem}\cite{Brigham1988Dom}\label{thm:DiaMoreThan2InComImpliesDomLessThan3}
		If $\gamma(\overline{G}) \ge 3$, then $diam(G) \le 2$.
	\end{theorem}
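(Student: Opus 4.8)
The plan is to prove the contrapositive statement: assuming $diam(G) \ge 3$ — under the usual convention that a disconnected graph has infinite diameter, so that this hypothesis also subsumes the case where $G$ is disconnected — I will produce a dominating set of $\overline{G}$ of cardinality at most $2$, which forces $\gamma(\overline{G}) \le 2$ and hence rules out $\gamma(\overline{G}) \ge 3$.

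First I would select a pair of vertices $u, v$ witnessing the failure of the bound $diam(G) \le 2$: either $d_G(u,v) \ge 3$, or (if $G$ is disconnected) $u$ and $v$ in distinct components. The one structural fact that drives the whole argument is that any such pair satisfies $N_G[u] \cap N_G[v] = \emptyset$; that is, $u$ and $v$ are non-adjacent in $G$ and have no common neighbour in $G$, since the existence of such a common neighbour (or an edge $uv$) would give a $u$–$v$ walk of length at most $2$, contradicting the choice of $u$ and $v$.

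Next I would pass to $\overline{G}$ and claim that $\{u,v\}$ is a dominating set of $\overline{G}$. Let $w \in V \setminus \{u,v\}$ be arbitrary. If $w$ were adjacent in $\overline{G}$ to neither $u$ nor $v$, then in $G$ the vertex $w$ would be adjacent to both $u$ and $v$, making $w \in N_G[u] \cap N_G[v]$ and contradicting the disjointness established above. Hence $w$ is adjacent in $\overline{G}$ to $u$ or to $v$. Since $w$ was arbitrary, $\{u,v\}$ dominates $\overline{G}$, so $\gamma(\overline{G}) \le 2$, and the contrapositive — and therefore the theorem — follows.

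I do not expect a genuine obstacle here; the argument is short and self-contained once one thinks in terms of the contrapositive and the identity that "$d_G(u,v) \ge 3$" is equivalent to "$N_G[u] \cap N_G[v] = \emptyset$". The only point deserving a line of care is the bookkeeping for a disconnected $G$: one simply takes $u$ and $v$ in different components, observes that every other vertex lies outside at least one of those two components and is therefore non-adjacent in $G$ to the corresponding endpoint, and concludes exactly as above that $\{u,v\}$ dominates $\overline{G}$.
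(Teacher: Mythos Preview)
Your proof is correct. The paper does not supply its own proof of this theorem---it is quoted from Brigham and Dutton without argument---but in the proof of Theorem~\ref{thm:SuffiCondnDom}(ii) the paper uses precisely the same observation (that two vertices at distance at least $3$ in a graph form a dominating set of its complement), so your contrapositive argument matches the approach the paper itself implicitly relies on.
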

	
	\section{Main Results}
	We present several structural results on graphs satisfying $\gamma(G) \le \floor{\sqrt{n}}$. The following theorem gives a necessary condition for a graph $G$ to have $\gamma(G) \le \floor{\sqrt{n}}$.
	
	\begin{theorem}
		Let $G$ be a graph of order $n \ge 2$ and let $\gamma(G) \le \floor{\sqrt{n}}$. Then $\Delta \ge \ceil{\sqrt{n}} - 1$ and the bound is sharp.
	\end{theorem}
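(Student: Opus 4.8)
The plan is to read the conclusion off of the lower bound in Theorem~\ref{thm:DomBoundDelta} and then exhibit an explicit extremal family to see that the bound cannot be improved. Essentially, the inequality $\gamma(G) \ge \ceil*{\tfrac{n}{1+\Delta}}$ says that a graph with small maximum degree cannot have small domination number, and combining this with the hypothesis $\gamma(G) \le \floor{\sqrt{n}}$ forces $\Delta$ to be large.

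First I would invoke Theorem~\ref{thm:DomBoundDelta} to obtain the chain
\[
\frac{n}{1+\Delta} \;\le\; \ceil*{\frac{n}{1+\Delta}} \;\le\; \gamma(G) \;\le\; \floor{\sqrt{n}} \;\le\; \sqrt{n},
\]
where the first inequality is trivial and the last holds since $n\ge 2$. Multiplying through by the positive quantity $1+\Delta$ and then dividing by $\sqrt{n}$ gives $\sqrt{n} \le 1+\Delta$. Since $\Delta$ is a nonnegative integer, $1+\Delta$ is an integer that is at least $\sqrt{n}$, so in fact $1+\Delta \ge \ceil{\sqrt{n}}$, that is, $\Delta \ge \ceil{\sqrt{n}} - 1$. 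The only point needing a word of care is this last rounding step, which is immediate because $1+\Delta \in \mathbb{Z}$.

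For sharpness I would take $n = k^2$ a perfect square and let $G$ be the disjoint union of $k$ copies of $K_k$. Then every vertex has degree $k-1$, so $\Delta = k-1 = \ceil{\sqrt{n}} - 1$; moreover any dominating set must contain at least one vertex from each copy, and one vertex per copy already dominates $G$, so $\gamma(G) = k = \floor{\sqrt{n}}$. Hence $\gamma(G) \le \floor{\sqrt{n}}$ holds while $\Delta$ equals $\ceil{\sqrt{n}} - 1$ exactly, which shows the inequality is best possible (and does so for infinitely many values of $n$).

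I do not expect a genuine obstacle: the statement is close to a direct corollary of the known degree bound, and the main work is simply the floor/ceiling bookkeeping in the displayed chain together with the routine verification that the family $kK_k$ attains equality. If one wanted extremal examples for non-square $n$ as well, a little more care would be needed in choosing the graph, but the perfect-square family already suffices to establish sharpness.
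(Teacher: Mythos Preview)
Your argument for the inequality is essentially the paper's: both combine the lower bound $\gamma(G)\ge\ceil{n/(1+\Delta)}$ from Theorem~\ref{thm:DomBoundDelta} with the hypothesis $\gamma(G)\le\floor{\sqrt{n}}$ and then round using the fact that $1+\Delta$ is an integer. For sharpness the paper cites the single example $C_5$ (where $\gamma=2=\floor{\sqrt{5}}$ and $\Delta=2=\ceil{\sqrt{5}}-1$), while you instead exhibit the infinite family $kK_k$; both are valid, and your family has the modest bonus of witnessing equality at infinitely many orders.
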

	
	\begin{proof}
		From Theorem~\ref{thm:DomBoundDelta}, we have $\gamma(G) \ge \ceil*{\frac{n}{1+\Delta}}$. Also $\gamma(G) \le \floor{\sqrt{n}}$ and hence $\ceil*{\frac{n}{1+\Delta}} \le \floor{\sqrt{n}}$.
		
		Therefore, $\frac{n}{1+\Delta} \le \floor{\sqrt{n}}$, which implies $\Delta \ge \ceil{\sqrt{n}} - 1$.
		
		For the cycle $C_5$, $\gamma = 2 = \floor{\sqrt{n}}$ and $\Delta = 2 = \ceil{\sqrt{n}} - 1$. Hence the bound is sharp.
	\end{proof}
	
	We now proceed to characterize connected graphs of order $n$ with $\gamma(G) = \floor{\sqrt{n}}$. For this purpose, we introduce a family of graphs.
	
	\begin{definition}\label{dfn:SpecialGraphClass}
		Let $H_1$ be a graph of order $k$ with the vertex set $V(H_1) = \{v_1, v_2, \dots, v_k\}$. Let $H_2$ be another graph satisfying:
		\begin{equation*}
			k(k-1) \le |V(H_2)| \le k^2+k
		\end{equation*}
		Let $\{V_1, V_2, \dots, V_k, V_{k+1}\}$ be a partition of $V(H_2)$ such that whenever $v_i$ is not an isolated vertex in $H_1$, $V_i \neq \emptyset$ for any $i$ with $1 \le i \le k$ (The set $V_{k+1}$ may be empty and $V_i$ may be empty for $1 \le i \le k$ if $v_i$ is an isolated vertex in $H_1$).
		
		Let $G$ be the graph obtained from $H_1$ and $H_2$ as follows.
		\begin{enumerate}
			\item For $1 \le i,j \le k$ with $i \neq j$, the vertex $v_i$ is adjacent to all the vertices in $V_i$ and not adjacent to any vertex in $V_j$.
			\item Each vertex of $V_{k+1}$ is adjacent to at least two vertices of $H_1$.
		\end{enumerate}
		
		For any subset $I \subseteq \{1, 2, \dots ,k\}$, we define two induced subgraphs as follows:
		\begin{align*}
			& H_1^I = H_1[\{v_i : i \in I\}]\\
			& H_2^I = H_2[(\bigcup_{i \in I} V_i) \cup \{v \in V_{k+1} : N(v) \cap \{v_i : i \notin I\} = \emptyset\}]
		\end{align*}
		For any two subsets $S \subseteq V(H_2)$ and $I \subseteq \{1, 2, \dots ,k\}$, we define two sets as follows:
		\begin{equation*}
			N_{H_1^I}(\overline{I}) = \bigcup_{i \notin I} N_{H_1^I}(v_i) \quad \text{and} \quad N_{H_1}(S) = \bigcup_{v \in S}^{} N_{H_1}(v)
		\end{equation*}
		We further assume that the graph $G$ satisfies the following condition:
		\begin{enumerate}[$(C).$]
			\item For any subset $I \subseteq \{1, 2, \dots ,k\}$ with $|I| = s$, if $S$ is a dominating set of $H_2^I$ with $|S| \le s-1$, then $|N_{H_1}(S) \cup N_{H_1^I}(\overline{I})| \le s-1$.
		\end{enumerate}
	\end{definition}
	
	Let $\mathscr{F}$ denote the family of graphs $G$ constructed as above. An example of a graph that belongs to the family $\mathscr{F}$ is shown in Figure~\ref{fig:Example}.
	
	\begin{figure}[h]
		\centering
		\bigskip
		\begin{tikzpicture}
			\begin{scope}[every node/.style={circle,draw,inner sep=0pt, minimum size=5ex}]
				\node (v1) at (0,-2) {$v_1$};
				\node (v11) at (-0.6,2) {$v_1^1$};
				\node (v12) at (0.6,2) {$v_1^2$};
				\node (v2) at (2.5,-2) {$v_2$};
				\node (v21) at (1.9,2) {$v_2^1$};
				\node (v22) at (3.1,2) {$v_2^2$};
				\node (vk) at (6.5,-2) {$v_k$};
				\node (vk1) at (5.9,2) {$v_k^1$};
				\node (vk2) at (7.1,2) {$v_k^2$};
				\node (vk11) at (8.5,2) {$v_{k+1}^1$};
				\node (vk12) at (9.5,2) {$v_{k+1}^2$};
				\node (vk13) at (10.5,2) {$v_{k+1}^3$};
			\end{scope}
			\node (x1) at (3.5,1) {};
			\node (x2) at (5.5,1) {};
			\node (x3) at (4,-2) {};
			\node (x4) at (5,-2) {};
			\node[label=above:$V_1$] at (0,2.55) {};
			\draw (0,2) ellipse (1cm and 0.75cm);
			\draw (v1) to (v11);
			\draw (v1) to (v12);
			\node[label=above:$V_2$] at (2.5,2.55) {};
			\draw (2.5,2) ellipse (1cm and 0.75cm);
			\draw (v2) to (v21);
			\draw (v2) to (v22);
			\draw [dashed] (x1) to (x2);
			\draw (vk) to (vk1);
			\draw (vk) to (vk2);
			\node[label=above:$V_k$] at (6.5,2.55) {};
			\draw (6.5,2) ellipse (1cm and 0.75cm);
			\node[label=above:$V_{k+1}$] at (9.5,2.55) {};
			\draw (9.5,2) ellipse (1.4cm and 0.75cm);
			\draw (v1) to (v2);
			\draw (v2) to (x3);
			\draw (vk) to (x4);
			\draw (vk11) to (v1);
			\draw (vk11) to (v2);
			\draw (vk12) to (v1);
			\draw (vk12) to (v2);
			\draw (vk13) to (v2);
			\draw (vk13) to (vk);
			\node[label=above:$H_1$] at (-1,-2.4) {};
			\draw (3.25,-2) ellipse (4cm and 0.75cm);
			\node[label=above:$H_2$] at (-1.5,1.6) {};
			\draw (5,2) ellipse (6.25cm and 2cm);
		\end{tikzpicture}
		\caption{An example of a graph in the family $\mathscr{F}$}
		\label{fig:Example}
		\bigskip
	\end{figure}
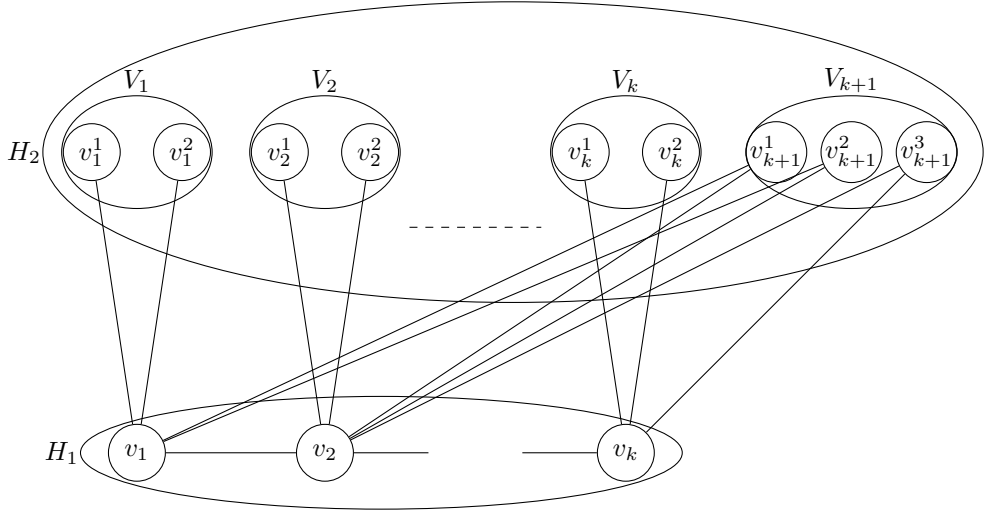
	
	We now proceed to prove that $\mathscr{F}$ is precisely the family of all connected graphs of order $n$ with $\gamma(G) = \floor{\sqrt{n}}$.
	
	\begin{theorem}
		Let $G$ be a connected graph of order $n$ with $n \ge 4$. Then $\gamma(G) = \floor{\sqrt{n}}$ if and only if $G \in \mathscr{F}$.
	\end{theorem}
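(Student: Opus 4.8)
The plan is to prove both directions of the equivalence by tying the structural decomposition in Definition~\ref{dfn:SpecialGraphClass} directly to a $\gamma$-set of $G$. Write $k = \floor{\sqrt{n}}$ throughout. For the forward direction, suppose $G$ is connected with $\gamma(G) = k$. Pick a $\gamma$-set $D = \{v_1, \dots, v_k\}$ and let $H_1 = G[D]$. Let $H_2 = G[V \setminus D]$ and, for each vertex $w \notin D$, assign $w$ to $V_i$ if $v_i$ is (a choice of) the unique neighbor of $w$ in $D$, and to $V_{k+1}$ if $w$ has at least two neighbors in $D$. This immediately realizes properties (1) and (2) of the construction. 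The cardinality bounds $k(k-1) \le |V(H_2)| = n - k \le k^2 + k$ are equivalent to $k^2 - k \le n - k$ and $n \le k^2 + 2k$, i.e. $k \le \sqrt{n} < k+1$ after rearrangement — routine since $k = \floor{\sqrt{n}}$ and $n \ge 4$ (one should check the small cases where $\floor{\sqrt{n}}$ sits at the boundary). The nonemptiness requirement — $V_i \ne \emptyset$ whenever $v_i$ is non-isolated in $H_1$ — is the first place real work is needed: if $v_i$ were non-isolated in $G[D]$ yet had no external private neighbor, one could still hope to shrink $D$; here I would invoke Theorem~\ref{thm:MinDomSetCondition}, noting that a $\gamma$-set is minimal, so $pn[v_i, D] \ne \emptyset$, and if $v_i$ is non-isolated in $G[D]$ then $epn[v_i,D] \ne \emptyset$, which is exactly a vertex of $V_i$. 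The crucial remaining step of the forward direction is verifying condition $(C)$: this is precisely a reformulation of the minimality/optimality of $D$. The point is that if some $I$ with $|I| = s$ and some dominating set $S$ of $H_2^I$ with $|S| \le s-1$ violated the conclusion, then replacing the vertices $\{v_i : i \in I\}$ in $D$ by $S$ together with $N_{H_1}(S) \cup N_{H_1^I}(\overline I)$ would produce a dominating set of $G$ of size $< k$ — contradiction. Checking that this replacement genuinely dominates all of $V$ (the vertices outside $\bigcup_{i\in I}V_i$ are handled by $D \setminus \{v_i : i \in I\}$, the vertices of $H_2^I$ by $S$, and the "boundary" vertices of $V_{k+1}$ with a neighbor $v_i$, $i \notin I$, automatically) is the careful bookkeeping that makes condition $(C)$ the right condition.

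For the converse, suppose $G \in \mathscr{F}$. First I would check $G$ is connected: property (2) and the nonemptiness of $V_i$ for non-isolated $v_i$, combined with connectivity being implicitly needed, forces this — actually connectivity is not automatic from the construction, so one should note that the theorem implicitly restricts to those $G \in \mathscr{F}$ that happen to be connected, or argue that $H_1$ connected plus the attachment rules give it; I would state this carefully. Next, $D = V(H_1)$ is a dominating set of $G$ of size $k = \floor{\sqrt{n}}$ (each $V_i$ hangs off $v_i$, and $V_{k+1}$ vertices have neighbors in $H_1$), so $\gamma(G) \le k$. The heart of the converse is the lower bound $\gamma(G) \ge k$: assume for contradiction that there is a dominating set $D'$ with $|D'| \le k-1$. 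I would let $I = \{i : v_i \in D'\}$ and $S = D' \cap V(H_2)$, then argue that because $D'$ dominates the vertices of the $V_j$ for $j \notin I$ (which can only be dominated from within $H_2$, since $v_j \notin D'$ and no other $v_\ell$ sees $V_j$), the part of $D'$ inside $H_2$ must dominate a large induced subgraph — essentially $H_2^{\,\overline I}$ or a closely related subgraph — and a counting argument against the size bound $|V(H_2)| \ge k(k-1)$ together with condition $(C)$ yields the contradiction. Concretely, condition $(C)$ applied with the set $\{v_i : v_i \notin D'\}$ playing the role of "$I$" (of size $\ge n - k + 1 \ge$ something) should say that the few vertices of $S$ cannot both dominate $H_2^I$ and, via their $H_1$-neighborhoods, cover enough of $H_1$ to compensate for the missing $v_i$'s.

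The main obstacle I anticipate is the exact matching between the quantifier structure in condition $(C)$ and the contradiction hypothesis in the converse: condition $(C)$ is stated for \emph{all} subsets $I$ and speaks of dominating sets of $H_2^I$ of size $\le s-1$, whereas in the converse one starts from an arbitrary small dominating set $D'$ of $G$ and must extract the right $I$ (presumably $I = \{i : v_i \notin D'\}$, so that $\overline I$ indexes the $v_i \in D'$) and the right $S$ so that $S$ dominates $H_2^I$ and $N_{H_1}(S) \cup N_{H_1^I}(\overline I)$ accounts for exactly the $H_1$-vertices not in $D'$ but dominated by $D'$. Getting the index conventions and the off-by-one in "$\le s-1$" to line up — and handling the vertices of $V_{k+1}$, which may be dominated either from $H_1$ or from within $H_2$ — is where the proof will demand the most care. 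A secondary subtlety is the role of isolated vertices of $H_1$: such a $v_i$ can be dominated from outside $D$, so in the forward direction $v_i \in D$ with $V_i = \emptyset$ must be reconciled with minimality of $D$ (it is fine, since such $v_i$ is its own private neighbor), and in the converse these vertices must be tracked separately in the counting. I would handle the $n \le 4$ boundary and the degenerate cases ($k=1$, $k=2$) by direct inspection.
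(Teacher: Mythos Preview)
Your overall architecture matches the paper's: decompose along a $\gamma$-set to get $H_1,H_2$ and the partition $V_1,\dots,V_{k+1}$ in one direction, and in the other direction show $V(H_1)$ dominates while condition~$(C)$ blocks any smaller dominating set. The uses of Theorem~\ref{thm:MinDomSetCondition} and the arithmetic for $k(k-1)\le |V(H_2)|\le k^2+k$ are exactly right.

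The genuine gap is in how you deploy condition~$(C)$, and it bites in both directions. The key structural fact you are missing is that, for any $S\subseteq V(H_2^I)$, one always has
\[
N_{H_1}(S)\ \cup\ N_{H_1^I}(\overline I)\ \subseteq\ \{v_i:i\in I\},
\]
because every vertex of $V(H_2^I)$ has all of its $H_1$-neighbours among $\{v_i:i\in I\}$, and $N_{H_1^I}(\overline I)\subseteq V(H_1^I)$ by definition. Consequently the inequality in~$(C)$ is really asking whether this set can equal all of $\{v_i:i\in I\}$: ``$\le s-1$'' means it misses at least one $v_i$ with $i\in I$; ``$\ge s$'' forces equality with $\{v_i:i\in I\}$. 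With this in hand:

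In your forward direction ($\gamma(G)=k\Rightarrow G\in\mathscr{F}$), the proposed replacement set $\{v_i:i\notin I\}\cup S\cup\bigl(N_{H_1}(S)\cup N_{H_1^I}(\overline I)\bigr)$ does \emph{not} have size $<k$: when $(C)$ fails, $N_{H_1}(S)\cup N_{H_1^I}(\overline I)=\{v_i:i\in I\}$, so your set is all of $D\cup S$, of size $\ge k$. The correct move (and the paper's) is to take simply $S'=S\cup\{v_i:i\notin I\}$, of size $\le (s-1)+(k-s)=k-1$; the set $N_{H_1}(S)\cup N_{H_1^I}(\overline I)$ is not something you add to $S'$, it is precisely the set of $v_i$ with $i\in I$ that $S'$ manages to dominate, and the failure of $(C)$ says this is all of them.

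In your converse ($G\in\mathscr{F}\Rightarrow\gamma(G)=k$), the contradiction has nothing to do with the bound $|V(H_2)|\ge k(k-1)$. With $I=\{i:v_i\notin D'\}$ and $S=D'\cap V(H_2)$, one checks (as you outline) that $S$ dominates $H_2^I$ with $|S|\le s-1$; condition~$(C)$ then gives $|N_{H_1}(S)\cup N_{H_1^I}(\overline I)|\le s-1$. But each $v_i$ with $i\in I$ is dominated by $D'$ either from $S$ or from some $v_j$ with $j\notin I$, so $\{v_i:i\in I\}\subseteq N_{H_1}(S)\cup N_{H_1^I}(\overline I)$, forcing $s\le s-1$. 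That is the whole contradiction; no counting against $|V(H_2)|$ is needed.

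Your side remarks on connectivity and on isolated $v_i$ in $H_1$ are reasonable, but they are not obstacles once the role of $N_{H_1}(S)\cup N_{H_1^I}(\overline I)$ is understood.
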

	
	\begin{proof}
		Let $G \in \mathscr{F}$. Then, $G$ is a graph obtained from some $H_1$ and $H_2$ as in Definition~\ref{dfn:SpecialGraphClass}. Therefore, we have $n = |V(G)| = k + |V(H_2)|$ and $k(k-1) \le |V(H_2)| \le k^2+k$. Hence, $k^2 \le n < (k+1)^2$, and so $k = \floor{\sqrt{n}}$. Clearly, $V(H_1)$ is a dominating set of $G$ and hence, $\gamma(G) \le \floor{\sqrt{n}}$. Now, let $D$ be any $\gamma$-set of $G$. Suppose $|D| \le k-1$. We consider two cases.
		
		\begin{case}\label{case:DLiesInH1}
			$D \cap V(H_2) = \emptyset$.
		\end{case}
		
		In this case, $D \subseteq V(H_1)$. Since $|D| \le k-1$, there exists a vertex $v_j \in V(H_1) \setminus D$. If $V_j \neq \emptyset$, then $N(w) \cap D = \emptyset$ for all $w \in V_j$. Otherwise, $v_j$ is an isolated vertex in $H_1$ and $N(v_j) \cap D = \emptyset$. Hence, in any case, $D$ is not a dominating set of $G$, which is a contradiction.
		
		\begin{case}
			$D \cap V(H_2) \neq \emptyset$.
		\end{case}
		
		Let $I = \{i : 1 \le i \le k, v_i \notin D\}$. Since $|D| \le k-1$ and $D \cap V(H_2) \neq \emptyset$, it follows that $I \neq \emptyset$. Let $|I| = s$. We claim that $S = D \setminus \{v_i : i \notin I\}$ is a dominating set of $H_2^I$. Let $v \in V(H_2^I) \setminus S$. Let $u$ be a vertex in $D$ which dominates $v$. From the definition of $H_2^I$, we have two subcases.
		
		\begin{subcase}
			$v \in \bigcup\limits_{i \in I} V_i$
		\end{subcase}
		
		Let $v \in V_j$ for some $j \in I$. Then $v_j \notin D$ and hence, $u \in S$.
		
		\begin{subcase}
			$v \in V_{k+1}$ and $N(v) \cap V_i = \emptyset$ for all $i \notin I$
		\end{subcase}
		
		Since $u \in N(v)$, it follows that $u \in V_j$ for some $j \in I$. Hence, $u \in S$.
		
		In any case, we have that $u \in S$. Thus $S$ dominates all the vertices of $V(H_2^I)$ and $|S| = |D| - (k-|I|)$. Since $|D| \le k-1$ and $|I| = s$, we have $|S| \le s-1$. Hence, it follows from condition $C$ of Definition~\ref{dfn:SpecialGraphClass} that
		\begin{equation}\label{eqn:ConditionCImplication}
			|N_{H_1}(S) \cup N_{H_1^I}(\overline{I})| \le s-1
		\end{equation}
		Now, let $v_i$ be a vertex with $i \in I$. Let $u$ be a vertex in $D$ which dominates $v_i$. If $u \in S$, then $v_i \in N_{H_1}(S)$. Otherwise, $u = v_j$ for some $j \notin I$ and hence, $v_i \in N_{H_1^I}(\overline{I})$. Thus $v_i \in N_{H_1}(S) \cup N_{H_1^I}(\overline{I})$. Therefore, we have $I \subseteq N_{H_1}(S) \cup N_{H_1^I}(\overline{I})$. Hence, $|N_{H_1}(S) \cup N_{H_1^I}(\overline{I})| \ge s$, which contradicts (\ref{eqn:ConditionCImplication}).
		
		Hence, it follows that $|D| \ge k$ and $\gamma(G) \ge k$. Thus $\gamma(G) = k = \floor{\sqrt{n}}$.
		
		Conversely, let $G$ be a connected graph of order $n$ with $\gamma(G) = k = \floor{\sqrt{n}}$. Since $n \ge 4$, we have $k \ge 2$. Let $S = \{v_1, v_2, \dots, v_k\}$ be a $\gamma$-set of $G$. Let $H_1 = G[S]$ and $H_2 = G[V \setminus S]$. Since $S$ is a $\gamma$-set of $G$, if any $v_i \in S$ is not an isolated vertex in $G[S]$, then by Theorem~\ref{thm:MinDomSetCondition}, we have $epn[v_i, S] \neq \emptyset$. Let $V_i = epn[v_i, S]$, for $1 \le i \le k$. Let $V_{k+1} = \{v \in V(H_2) : |N(v) \cap S| \ge 2\}$.
		
		Clearly, $\{V_1, V_2, \dots, V_k, V_{k+1}\}$ is a partition of $V(H_2)$ and for any $i$ with $1 \le i \le k$, whenever $v_i$ is not an isolated vertex in $H_1$, we have $V_i \neq \emptyset$, the vertex $v_i$ is adjacent to each vertex in $V_i$ and is not adjacent to any vertex in $V_j$ for all $j \neq i$, and $1 \le i,j \le k$. Also, each vertex of $V_{k+1}$ is adjacent to at least two vertices of $H_1$. Since $\floor{\sqrt{n}} = k$, we have $k^2 \le n < (k+1)^2$, and hence, it follows that $k(k-1) \le |V(H_2)| \le k^2+k$.
		
		Now, suppose condition $C$ of Definition~\ref{dfn:SpecialGraphClass} does not hold for $G$. Then there exists a subset $I \subseteq \{1, 2, \dots ,k\}$ with $|I| = s$, and a dominating set $S$ of $H_2^I$ with $|S| \le s-1$ and $|N_{H_1}(S) \cup N_{H_1^I}(\overline{I})| \ge s$. We claim that $S' = S \cup \{v_i: i \notin I\}$ is a dominating set of $G$.
		
		Let $v \in V(G) \setminus S'$. If $v \in V(H_2^I)$, then $v$ is adjacent to some vertex in $S$. Otherwise, if $v \in V_j$ for some $j$ with $j \notin I$, then $v$ is adjacent to the vertex $v_j$ in $S'$. Now, let $v \in V_{k+1}$. Since $v \notin V(H_2^I)$, $v$ is adjacent to a vertex $v_j$ for some $j$ with $j \notin I$.
		
		Finally, let $v \in V(H_1)$. Then $v = v_i$ for some $i \in I$. Since $|N_{H_1}(S) \cup N_{H_1^I}(\overline{I})| \ge s = |I|$, it follows that $v_i$ is adjacent to some vertex in $S'$. Thus $S'$ is a dominating set of $G$. Also, we have
		\begin{equation*}
			|S'| = |S| + |\{v_i: i \notin I\}| \le (s-1) + (k-|I|) = (s-1)+(k-s) = k-1
		\end{equation*}
		Thus $|S'| \le k-1$, which is a contradiction to the fact that $\gamma(G) = k$. Hence, condition $C$ of Definition~\ref{dfn:SpecialGraphClass} holds for $G$ and $G \in \mathscr{F}$.
	\end{proof}
	
	We now proceed to find sufficient conditions for a graph $G$ to have $\gamma(G) \le \floor{\sqrt{n}}$.
	
	\begin{theorem}\label{thm:SuffiCondnDom}
		Let $G$ be a graph of order $n \ge 2$. If any one of the following conditions holds, then $\gamma(G) \le \floor{\sqrt{n}}$.
		
		\begin{enumerate}
			\item $\overline{G}$ is a disconnected graph.
			\item $diam(\overline{G}) \ge 3$.
		\end{enumerate}
	\end{theorem}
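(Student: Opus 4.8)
The plan is to show that, in both cases, one actually has the much stronger bound $\gamma(G) \le 2$, and to combine this with the observation that the hypotheses force $n \ge 4$ (so that $\floor{\sqrt{n}} \ge 2$); the only genuinely separate point is the handling of the tiny orders $n \in \{2,3\}$ under hypothesis~(i).

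For~(i), suppose $\overline{G}$ is disconnected. First I would dispose of the degenerate possibility that $\overline{G}$ has an isolated vertex $v$: then $v$ is adjacent in $G$ to every other vertex, so $\{v\}$ is a dominating set and $\gamma(G) = 1 \le \floor{\sqrt{n}}$ (this also covers $n = 2,3$). Otherwise every component of $\overline{G}$ has at least two vertices, and since $\overline{G}$ has at least two components, $n \ge 4$. I would then pick $u$ and $w$ in two distinct components $C$ and $C'$ of $\overline{G}$ and check that $\{u,w\}$ dominates $G$: for any $x \in V(G)$, if $x \notin C$ then $x$ and $u$ lie in different components of $\overline{G}$, hence $xu \notin E(\overline{G})$ and so $xu \in E(G)$ (note $x \neq u$); symmetrically, if $x \in C$ then $xw \in E(G)$. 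Thus $\gamma(G) \le 2 \le \floor{\sqrt{n}}$.

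For~(ii), suppose $diam(\overline{G}) \ge 3$. Then $\overline{G}$ is connected and contains two vertices at distance at least $3$, so in particular $n \ge 4$. I would invoke Theorem~\ref{thm:DiaMoreThan2InComImpliesDomLessThan3} applied to $\overline{G}$ in the role of $G$: its contrapositive asserts that $diam(\overline{G}) \ge 3$ implies $\gamma(\overline{\overline{G}}) = \gamma(G) \le 2$, whence $\gamma(G) \le 2 \le \floor{\sqrt{n}}$. (If one prefers to keep the argument self-contained, one can observe directly that two vertices $u,v$ with $d_{\overline{G}}(u,v) \ge 3$ satisfy $N_{\overline{G}}[u] \cap N_{\overline{G}}[v] = \emptyset$, so every vertex of $G$ is non-adjacent in $\overline{G}$ to $u$ or to $v$, i.e.\ adjacent in $G$ to $u$ or to $v$, and hence $\{u,v\}$ dominates $G$.)

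I do not expect any substantial obstacle here: the entire content is that disconnectedness of $\overline{G}$ (equivalently, $G$ being a join) or large diameter of $\overline{G}$ makes $G$ very ``dense'' in the domination sense. The only steps requiring a moment's care are verifying $n \ge 4$ before using $\gamma(G) \le 2$, and noting that under~(i) the orders $n = 2,3$ are not actually exceptions because disconnectedness of $\overline{G}$ then forces an isolated vertex in $\overline{G}$, i.e.\ a universal vertex in $G$.
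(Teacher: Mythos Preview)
Your proposal is correct and follows essentially the same approach as the paper: in both cases one exhibits a dominating set of size at most $2$ in $G$ (via two vertices in distinct components of $\overline{G}$, or two vertices at distance $\ge 3$ in $\overline{G}$, or equivalently via Theorem~\ref{thm:DiaMoreThan2InComImpliesDomLessThan3}) and checks $n \ge 4$. The only cosmetic difference is that the paper handles $n\in\{2,3\}$ by enumerating the possible $G$, whereas you observe that these small orders already fall under the ``$\overline{G}$ has an isolated vertex'' case.
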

	
	\begin{proof}
		$(i)$. Since $\overline{G}$ is a disconnected graph, it follows that $G = K_2$ if $n=2$, and $G = P_3$ or $K_3$ if $n=3$. In both these cases, $\gamma(G) = 1 \le \floor{\sqrt{n}}$. Now, let $n \ge 4$. Let $C_1$ and $C_2$ be two components of $\overline{G}$. Let $x \in V(C_1)$ and $y \in V(C_2)$. It can be easily verified that $\{x,y\}$ is a dominating set of $G$ and hence, $\gamma(G) \le 2 \le \floor{\sqrt{n}}$.
		
		$(ii)$. Let $diam(\overline{G}) \ge 3$. Then we have $n \ge 4$. Also if $x,y \in V$ and $d_{\overline{G}}(x,y) \ge 3$, then $\{x,y\}$ is a dominating set of $G$. Alternatively, since $diam(\overline{G}) \ge 3$, by Theorem~\ref{thm:DiaMoreThan2InComImpliesDomLessThan3}, we have $\gamma(G) \le 2$. Thus $\gamma(G) \le 2 \le \floor{\sqrt{n}}$.
	\end{proof}
	
	Now, we proceed towards the problem of deciding for a given graph $G$ whether $\gamma(G) \le \floor{\sqrt{n}}$ or $\gamma(\overline{G}) \le \floor{\sqrt{n}}$. For this purpose, we need the following definitions.
	
	\begin{definition}
		A graph $G$ is said to be a $t$-complementary self-centered graph, denoted by $SCC(t)$ if both $G$ and $\overline{G}$ are self-centered graphs satisfying the conditions:
		\begin{equation*}
			rad(G)=diam(G)=rad(\overline{G})=diam(\overline{G})=t
		\end{equation*}
	\end{definition}
	
	\begin{definition}
		Let $G$ be a graph of order $n$. Then $G$ is said to be in domination type-I if $\gamma(G) \le \floor{\sqrt{n}}$ and is said to be in domination type-II if $\gamma(\overline{G}) \le \floor{\sqrt{n}}$.
	\end{definition}
	
	Observe that every graph $G$ is in domination type-I or domination type-II or both.
	
	\begin{theorem}\label{thm:DomTypeDecision}
		Let $G$ be a graph of order $n$ and let $G \notin SCC(2)$. Then whether $G$ is in domination type-I or in domination type-II can be determined in polynomial time.
	\end{theorem}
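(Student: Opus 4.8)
The plan is to turn the hypothesis $G \notin SCC(2)$ into an explicit structural dichotomy: I will show that it forces $\gamma(G) \le 2$ or $\gamma(\overline{G}) \le 2$, and that the relevant dominating set of size at most $2$ can be produced by a polynomial-time search. All the ingredients needed are cheap to compute: forming $\overline{G}$, testing connectivity of $G$ and of $\overline{G}$, and running a breadth-first search from every vertex (which yields all pairwise distances, hence $rad$ and $diam$) all take time polynomial in $n$. So it is enough to check that inspecting these quantities always pins down a suitable dominating set.

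First, if $n \le 3$ the answer is found by brute force, so assume $n \ge 4$, whence $\floor{\sqrt{n}} \ge 2$. If $\overline{G}$ is disconnected, then by Theorem~\ref{thm:SuffiCondnDom}(i) we have $\gamma(G) \le \floor{\sqrt{n}}$, so $G$ is in domination type-I, and picking $x,y$ in distinct components of $\overline{G}$ gives an explicit dominating set $\{x,y\}$ of $G$; symmetrically, if $G$ is disconnected then $G$ is in domination type-II. Since at least one of $G, \overline{G}$ is connected, it remains to treat the case where both are connected.

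So suppose $G$ and $\overline{G}$ are both connected; in particular $n \ge 4$ and $rad(G), diam(G), rad(\overline{G}), diam(\overline{G})$ are all finite. I claim one of $diam(G), diam(\overline{G})$ is at least $3$. Indeed, $diam(H)=1$ exactly when $H$ is complete, and $rad(H)=1$ exactly when $H$ has a universal vertex, equivalently when $\overline{H}$ has an isolated vertex, which cannot happen here because $G$ and $\overline{G}$ are connected of order at least $2$; hence $diam(G), rad(G), diam(\overline{G}), rad(\overline{G})$ are all at least $2$. If, contrary to the claim, $diam(G)=diam(\overline{G})=2$, then from $rad \le diam$ we would get $rad(G)=rad(\overline{G})=2$ as well, so all four parameters equal $2$ and $G \in SCC(2)$, contradicting the hypothesis. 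Thus $diam(G) \ge 3$ or $diam(\overline{G}) \ge 3$. In the first case Theorem~\ref{thm:DiaMoreThan2InComImpliesDomLessThan3} gives $\gamma(\overline{G}) \le 2 \le \floor{\sqrt{n}}$, so $G$ is in domination type-II, and any pair $x,y$ with $d_G(x,y) \ge 3$ (read off from the BFS distances) is a dominating set of $\overline{G}$. In the second case the same argument applied to $\overline{G}$ shows $\gamma(G) \le 2$ and $G$ is in domination type-I.

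Combining the cases gives a polynomial-time algorithm: compute $\overline{G}$; if $\overline{G}$ (respectively $G$) is disconnected, output type-I (respectively type-II) with the two-vertex certificate above; otherwise compute all pairwise distances and output type-II if $diam(G) \ge 3$ and type-I if $diam(\overline{G}) \ge 3$, together with the corresponding certificate. The substantive point is the structural claim that, once both graphs are connected, $G \notin SCC(2)$ forces a diameter to reach $3$; the algorithmic layer is then routine. I expect the main thing to get right is the exhaustiveness of this case analysis — specifically the two elementary equivalences ($diam = 1 \Leftrightarrow$ complete, $rad = 1 \Leftrightarrow$ has a universal vertex) together with the fact that a universal vertex in $G$ disconnects $\overline{G}$, which is what guarantees that the only configuration the procedure fails to resolve is precisely the excluded one, $SCC(2)$.
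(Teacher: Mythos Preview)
Your proof is correct and follows essentially the same case analysis as the paper: disconnected complement, disconnected graph, then diameter at least $3$ in $G$ or $\overline{G}$, with the observation that the only remaining configuration is $SCC(2)$. Your treatment is slightly more streamlined in that you absorb the paper's separate $rad=1$/$diam=1$ cases into the connectivity step (via the equivalence $rad(G)=1 \Leftrightarrow \overline{G}$ has an isolated vertex), but the underlying argument is the same.
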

	
	\begin{proof}
		If $G$ is a disconnected graph, then it follows from Theorem~\ref{thm:SuffiCondnDom} that $G$ is in domination type-II. Similarly, if $\overline{G}$ is a disconnected graph, then $G$ is in domination type-I.
		
		Suppose both $G$ and $\overline{G}$ are connected. It follows from Theorem~\ref{thm:SuffiCondnDom} that if $diam(G) \ge 3$, then $G$ is in domination type-II. Similarly, if $diam(\overline{G}) \ge 3$, then $G$ is in domination type-I.
		
		If $diam(G) = 1$ or $rad(G)=1$, then $\gamma(G) = 1$, and hence, $G$ is in domination type-I. Similarly, if $diam(\overline{G}) = 1$ or $rad(\overline{G})=1$, then $\gamma(\overline{G}) = 1$ and hence, $G$ is in domination type-II.
		
		In all other cases, $G \in SCC(2)$. Also, given a graph $G$, it can be decided in polynomial time whether $G$ is connected or not. If $G$ is connected, then $diam(G)$ and $rad(G)$ can be computed in polynomial time. Hence, the result follows.
	\end{proof}
	
	For graphs in $SCC(2)$, we propose the following conjecture.
	
	\begin{conjecture}
		If $G \in SCC(2)$, then $\gamma(G) \le \ceil{\sqrt{n}}$ and $\gamma(\overline{G}) \le \ceil{\sqrt{n}}$.
	\end{conjecture}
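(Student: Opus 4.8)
The property $G\in SCC(2)$ is preserved under complementation, and by Observation~\ref{obs:GOrGCompHasLesserDom} at least one of $\gamma(G),\gamma(\overline G)$ is at most $\floor{\sqrt n}\le\ceil{\sqrt n}$; so, after possibly interchanging $G$ and $\overline G$, it suffices to show that $\gamma(\overline G)\le\ceil{\sqrt n}$ whenever $G\in SCC(2)$ and $\gamma(G)\le\floor{\sqrt n}$. I would argue by contradiction, assuming $\gamma(\overline G)\ge\ceil{\sqrt n}+1$; then Theorem~\ref{thm:JaegerDomProduct} already gives $\gamma(G)\le n/(\ceil{\sqrt n}+1)\le\ceil{\sqrt n}-1$. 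The first move is a degree reduction. Since $\overline G$ has diameter $2$, the closed $\overline G$-neighbourhood of a minimum-degree vertex is a dominating set of $\overline G$, so $\gamma(\overline G)\le 1+\delta(\overline G)=n-\Delta(G)$; and Theorem~\ref{thm:DomBoundDelta} applied to $\overline G$ gives $\gamma(\overline G)\le n-\Delta(\overline G)=1+\delta(G)$. Thus $\gamma(\overline G)\le\min\{1+\delta(G),\,n-\Delta(G)\}$, which contradicts $\gamma(\overline G)\ge\ceil{\sqrt n}+1$ unless $\delta(G)\ge\ceil{\sqrt n}$ and $\Delta(G)\le n-\ceil{\sqrt n}-1$. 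So from here on every degree of $G$ lies in the band $[\ceil{\sqrt n},\,n-\ceil{\sqrt n}-1]$, and $G$ has a minimum dominating set $D$ with $|D|\le\ceil{\sqrt n}-1$.

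The second move uses the elementary identity that, for any graph $H$, a set $S\subseteq V(H)$ dominates $\overline H$ if and only if $\bigcap_{s\in S}N_H(s)=\emptyset$ (a vertex outside $S$ lying in the common intersection is $H$-adjacent to all of $S$, hence not $\overline H$-dominated by $S$, and conversely any failure of domination produces such a vertex; note the common intersection is automatically disjoint from $S$). With $H=G$ and $S=D$, put $I:=\bigcap_{d\in D}N_G(d)$. If $I=\emptyset$ then $D$ dominates $\overline G$, a contradiction. If $I\neq\emptyset$ but some vertex $w$ of $G$ has no neighbour in $I$, then $\bigcap_{s\in D\cup\{w\}}N_G(s)=I\cap N_G(w)=\emptyset$, so $D\cup\{w\}$ dominates $\overline G$, giving $\gamma(\overline G)\le\gamma(G)+1\le\ceil{\sqrt n}$ --- again a contradiction. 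Hence the only remaining possibility, the \emph{crux case}, is that $I$ is a nonempty total dominating set of $G$ sitting inside every $N_G(d)$, $d\in D$. There one still has $\gamma(\overline G)\le|D|+j$, where $j$ is the least size of a set $S'\subseteq V(G)$ with $\bigcap_{s\in S'}\bigl(N_G(s)\cap I\bigr)=\emptyset$ (so $j\le\gamma(\overline{G[I]})$), and it remains to prove $j\le\ceil{\sqrt n}-|D|$.

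That last estimate is the main obstacle, and I expect it to be hard. By the degree bound, every vertex of $I$ has at least $\ceil{\sqrt n}$ non-neighbours in $G$, so a uniformly random vertex ``kills'' a prescribed element of $I$ with probability $\gtrsim 1/\sqrt n$; but a union bound then spends $\Theta(\sqrt n\log n)$ extra vertices, overshooting the budget. This is precisely the $\sqrt n$ versus $\sqrt n\log n$ gap familiar from domination in diameter-two graphs, and a proof must use \emph{both} diameter-$2$ conditions at once. The plan would be to run a greedy selection inside $I$ and to use $diam(\overline G)=2$ to certify that the current common intersection cannot remain large (the required external common $\overline G$-neighbours prevent the trace sets $N_G(s)\cap I$ from all sharing a big core), forcing the intersection to shrink by a constant factor at \emph{every} step, not merely on average. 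Even without this, the second move already proves $\gamma(\overline G)\le\gamma(G)+1$, hence the conjecture, for every $SCC(2)$ graph outside the crux case; and the crux case looks most approachable for vertex-transitive $G$ --- where averaging over orbits should make the correction step lossless, and which covers $C_5$, the Paley graphs and the rook's graphs $R_{k,k}$, for which $\gamma=\ceil{\sqrt n}$ is attained --- and, more generally, when $\delta(G)=\Omega(n)$. The hardest instances should be the nearly regular $SCC(2)$ graphs with $\sqrt n\ll\delta(G)\le\Delta(G)\ll n$.
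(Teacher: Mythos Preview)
The paper does not prove this statement: it is posed as an open conjecture, with only $C_5$ given as supporting evidence. There is no proof in the paper to compare your attempt against.

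Your proposal is not a complete proof either, as you acknowledge. The reductions in your first two moves are correct. Under the contradiction hypothesis $\gamma(\overline G)\ge\ceil{\sqrt n}+1$, Theorem~\ref{thm:JaegerDomProduct} together with integrality does force $\gamma(G)\le\ceil{\sqrt n}-1$ (since $n/(\ceil{\sqrt n}+1)<\ceil{\sqrt n}$); the degree bounds $\delta(G)\ge\ceil{\sqrt n}$ and $\Delta(G)\le n-\ceil{\sqrt n}-1$ follow as you say; and the characterisation that $S$ dominates $\overline H$ if and only if $\bigcap_{s\in S}N_H(s)=\emptyset$ is valid. Consequently any putative counterexample to the conjecture must land in your crux case, where $I=\bigcap_{d\in D}N_G(d)$ is a nonempty total dominating set of $G$ for some minimum dominating set $D$ with $|D|\le\ceil{\sqrt n}-1$.

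The gap is exactly the one you isolate: in the crux case you need at most $\ceil{\sqrt n}-|D|$ further vertices whose $G$-neighbourhoods meet $I$ in sets with empty common intersection, and nothing you have written establishes this. Your own probabilistic heuristic already shows why a naive union bound overshoots by a $\log n$ factor, and the sketched greedy-with-geometric-shrinkage plan is unsupported: you give no argument that $\mathrm{diam}(\overline G)=2$ forces the running intersection to contract by a constant factor at each step. So the proposal does not resolve the conjecture; it is a reasonable outline of an attack that correctly narrows the problem to the crux case, but the crux case remains open and the paper offers nothing further on it.
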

	
	Note that $C_5$ is an example of a graph in $SCC(2)$ which satisfies the conjecture.
	
	\section{Conclusion}
	Using the multiplicative version of the Nordhaus-Gaddum type result for the domination number $\gamma$ of a graph $G$, we have obtained a substantially improved upper bound for $\gamma$ and we have proved that for most of the graphs, this bound holds. An exhaustive survey of Nordhaus-Gaddum type results for various graph theoretic parameters is given in \cite{Aouchiche2013NordGaddSurvey}. This leads to two potential directions for further research, which are given below.
	
	\textbf{Problem 1:} Determine graph theoretic parameters for which an improved upper or lower bound can be obtained using Nordhaus-Gaddum type results.
	
	\textbf{Problem 2:} For each of the parameters identified in Problem 1, investigate the structural properties of graphs satisfying the improved bound.
	
	\bibliographystyle{SK}
	\bibliography{Domination}
		
\end{document}